\newcommand{\bL}{\mathbb{L}}
\newcommand{\Herm}{\mathrm{He}}
\newcommand{\fD}{\mathscr{D}}
\DeclareMathOperator*{\argmin}{arg\,min}
\newcommand{\R}{\mathbb{R}}
\newcommand{\N}{\mathbb{N}}
\newcommand{\C}{\mathbb{C}}
\newcommand{\Z}{\mathbb{Z}}
\renewcommand{\P}{\mathbb{P}}
\renewcommand{\Im}{\operatorname{Im}}
\newcommand{\nint}{\mathop{\mathrm{nint}}\nolimits}
\newcommand{\eps}{\varepsilon}
\newcommand{\eqdistr}{\stackrel{d}{=}}
\newcommand{\todistr}{\overset{d}{\underset{n\to\infty}\longrightarrow}}
\newcommand{\ton}{\overset{}{\underset{n\to\infty}\longrightarrow}}
\newcommand{\ind}{\mathbbm{1}}
\newcommand{\dd}{{\rm d}}
\newcommand{\eee}{{\rm e}}
\newcommand{\stirling}[2]{\genfrac{[}{]}{0pt}{}{#1}{#2}}
\newcommand{\stirlingb}[2]{\genfrac{\{}{\}}{0pt}{}{#1}{#2}}
\theoremstyle{plain}
\newtheorem{theorem}{Theorem}[section]
\newtheorem{proposition}[theorem]{Proposition}
\theoremstyle{definition}
\theoremstyle{remark}
\newtheorem{remark}[theorem]{Remark}
\newcounter{step}[theorem]
\begin{document}

\author{Zakhar Kabluchko}
\address{Zakhar Kabluchko, Institut f\"ur Mathematische Statistik,
Universit\"at M\"unster,
Or\-l\'e\-ans--Ring 10,
48149 M\"unster, Germany}
\email{zakhar.kabluchko@uni-muenster.de}

\author{Alexander Marynych}
\address{Alexander Marynych, Faculty of Cybernetics, Taras Shevchenko National University of Kyiv, 01601 Kyiv, Ukraine}
\email{{marynych@unicyb.kiev.ua}}

\author{Henning Sulzbach}
\address{Henning Sulzbach,
School of Computer Science, McGill University,
3480 University Street
H3A 0E9 Montr\'eal, QC, Canada}
\email{{henning.sulzbach@gmail.com}}

\title[Mode and Edgeworth expansion for the Ewens distribution]
{Mode and Edgeworth expansion for the Ewens distribution and the Stirling numbers}
%{Asymptotic expansions for the Ewens distribution and the Stirling numbers of the first kind}

\keywords{Asymptotic expansion, Ewens distribution, Stirling numbers of the first kind, mode of a distribution}
%binary search trees, random recursive trees}

\subjclass[2010]{Primary, 	11B73; secondary, 60C05, 41A60, 60F05, 60F10}
\thanks{}
\begin{abstract}
We provide asymptotic expansions for the Stirling numbers of the first kind and, more generally, the Ewens (or Karamata-Stirling) distribution. Based on these expansions, we obtain some new results on the asymptotic properties of the mode and the maximum of the Stirling numbers and the Ewens distribution.  For arbitrary $\theta>0$ and for all sufficiently large $n\in\N$, the unique maximum of the Ewens probability mass function
$$
\mathbb L_n(k) = \frac{\theta^k}{\theta(\theta+1)\ldots(\theta+n-1)} \stirling {n}{k}, \quad k=1,\ldots,n,
$$
is attained at $k= \left\lfloor \theta \log n + \frac{\theta \Gamma'(\theta)}{\Gamma(\theta)} - \frac 12\right\rfloor$
or  $k=\left\lceil \theta \log n + \frac{\theta \Gamma'(\theta)}{\Gamma(\theta)} + \frac 12\right\rceil$.
We prove that the mode is
$$
k=\left\lfloor \theta\log n  - \frac{\theta \Gamma'(\theta)}{\Gamma(\theta)}\right\rfloor
$$
for a set of $n$'s of asymptotic density $1$, yet this formula is not true for infinitely many $n$'s.
\end{abstract}

\maketitle

\section{Introduction and statement of results}
\subsection{Introduction}
The (unsigned) \textit{Stirling numbers} of the first kind $\stirling{n}{k}$ are defined, for $n\in\N$ and $1\leq k\leq n$, by the formula
\begin{equation}\label{eq:stirling_def}
x^{(n)} := x(x+1)\ldots (x+n-1)  = \sum_{k=1}^n \stirling{n}{k} x^k,\quad x\in\R.
\end{equation}
For $n\in\N$, a random variable $K_n(\theta)$ is said to have {the} {\it Ewens distribution}  with parameter $\theta>0$ if its probability mass function is given by the formula
$$
\P\{K_n(\theta)=k\}=\frac{\theta^k}{\theta^{(n)}}\stirling{n}{k},\quad k=1,\ldots,n.
$$
The same distribution was also called \textit{Karamata-Stirling law} in~\cite{bingham_tauberian}.
One can interpret $K_n(\theta)$ as the number of blocks in a random partition of $\{1,\ldots,n\}$ distributed according to the Ewens sampling formula, or, equivalently, the number of different alleles in the infinite alleles model, the number of tables in a Chinese restaurant process, or the number of colors in the Hoppe urn. The Ewens sampling formula  plays an important role in population genetics; see, e.g.,\
\cite{ewens_tavare} and~\cite[Section~1.3]{durrett_DNA}.  There is a distributional representation of $K_n(\theta)$ as a sum of independent random variables
\begin{equation}\label{eq:K_n_theta_rep}
K_n(\theta) \eqdistr \xi_1+\ldots+\xi_n, \text{ where } \xi_i\sim \text{Bern}(\theta/(\theta + i-1))
\end{equation}
and $\text{Bern}(p)$ denotes the Bernoulli distribution with parameter $p$. In the special case $\theta=1$, {classical results going back at least to  Feller \cite{feller1945} and R\'enyi \cite{renyi} state that} the random variable $K_n(1)$ has the same distribution as the number of cycles in a uniformly chosen random permutation of $n$ objects, or the number of records in a sample of $n$ i.i.d.\ variables from a continuous distribution; see {also}~\cite{pitman_book} for these and other properties. It follows easily from Lindeberg's theorem that $K_n(\theta)$ satisfies a central limit theorem of the form
$$
\frac{K_n(\theta) - \theta \log n}{\sqrt{\theta \log n}} \todistr \text{N}(0,1)
$$
known as Goncharov's CLT in the case $\theta=1$.

Asymptotic expansions, as $n\to\infty$, of the Stirling numbers $\stirling {n}{k}$ in various regions of $k$ were provided in~\cite{moser_wyman,temme,wilf,hwang_stirling,hwang_diss,louchard_first_kind}. Most notably, \citet[Theorem~2]{hwang_stirling} (see also~\cite[Theorem~14 on p.~108]{hwang_diss} for a more general result) gave an asymptotic expansion valid uniformly in the domain $2\leq k \leq \eta \log n$, for any fixed $\eta>0$. Louchard~\cite[Theorem~2.1]{louchard_first_kind} computed three non-trivial terms of the asymptotic expansion in the central regime $k= \log n + O(\sqrt {\log n})$ which is similar to the classical Edgeworth expansion in the central limit theorem.

In this short note we start by deriving a full Edgeworth expansion, as $n\to\infty$, for the sequence of probability mass functions $k\mapsto \P\{K_n(\theta)=k\}$ which is uniform both in $\theta\in [1/\eta, \eta]$ (where $\eta>1$) and in
$k\in \{1,\ldots,n\}$; see Theorem~\ref{theo:stirling_edgeworth}.  Our result is an application of the general Edgeworth expansion for deterministic or random profiles obtained recently in \cite{Kabluchko+Marynych+Sulzbach:2016}.
Using this asymptotic expansion we derive some new results on the mode and the maximum of the Ewens distribution. In the case $\theta=1$ the mode can be interpreted as the most probable number of cycles in a random permutation of $n$ objects and was investigated in the works of \citet{hammersley} (see also~\cite[pp.~216--225]{hammersley_restricted}) and \citet{erdoes_on_hammersley}.
Our results on the mode and the maximum will be stated in Theorems~\ref{theo:stirling_mode_maximum} and~\ref{theo:stirling_mode_further} below.

%The first few terms of a similar asymptotic expansion for the Stirling numbers of the first kind, that is for $\theta=1$, were obtained by \citet{louchard_first_kind}.
%Although Louchard's method can be applied to derive more terms, it is natural to ask for a general formula for the $r$-th term in the expansion. Such a formula will be given in Theorem~\ref{theo:stirling_edgeworth} below.
%Different types of expansions, again for $\theta=1$ only, were {obtained} in~\cite{hwang_stirling,moser_wyman,temme,wilf}.

\subsection{Asymptotic expansion of the Ewens distribution}
Before stating our main result some notions have to be recalled. The (complete) \textit{Bell polynomials} $B_j(z_1,\ldots,z_j)$ are defined by the formal identity
\begin{equation}\label{eq:bell_poly_def1}
\exp\left\{ \sum_{j=1}^{\infty} \frac{x^j}{j!} z_j \right\}
=
\sum_{j=0}^{\infty} \frac{x^j}{j!} B_j(z_1,\ldots,z_j).
\end{equation}
Therefore $B_0=1$ and, for $j\in\N$,
\begin{equation}\label{eq:bell_poly_def}
B_j(z_1,\ldots,z_j) = \sum{}^{'}\frac{j!}{i_1!\ldots i_j!} \left(\frac{z_1}{1!}\right)^{i_1}\ldots  \left(\frac{z_j}{j!}\right)^{i_j},
\end{equation}
where the sum $\sum{}^{'}$ is taken over all $i_1,\ldots,i_j\in\N_0$ satisfying $1i_1+2i_2+\ldots +j i_j =j$. For example, the first three Bell polynomials are given by
\begin{equation}\label{eq:Bell_poly_first}
B_1(z_1)=z_1, \quad
B_2(z_1,z_2) = z_1^2 + z_2, \quad
B_3(z_1,z_2,z_3) = z_1^3+3z_1z_2+z_3.
\end{equation}
Further, we will use the ``probabilist'' \textit{Hermite polynomials} $\Herm_n(x)$ defined by:
\begin{equation}\label{eq:Herm}
\Herm_n(x)= \eee^{\frac 12 x^2} \left(-\frac{\dd}{\dd x}\right)^n \eee^{-\frac 12 x^2},\quad  n\in\N_0.
\end{equation}
The first few Hermite polynomials needed for the first three terms of the expansion are
\begin{align*}
&\Herm_0(x)=1,\quad \Herm_1(x)= x,\quad
\Herm_2(x)= x^2-1, \quad
\Herm_3(x) = x^3-3x,\\
&\Herm_4(x)= x^4-6x^2+3, \quad
\Herm_6(x)= x^6 - 15 x^4 + 45 x^2 - 15.
\end{align*}

\begin{theorem}\label{theo:stirling_edgeworth}
Fix $r\in \N_0$ and a compact subset $L\subset (0,\infty)$. Uniformly over $\theta\in L$ we have
$$
\lim_{n\to\infty}(\log n)^{\frac{r+1}{2}}\sup_{k=1,\ldots,n}
\left|\P\{K_n(\theta)=k\} - \frac{\eee^{-\frac 12 x_n^2(k,\theta)}}{\sqrt{2\pi \theta \log n}} \sum_{j=0}^r \frac{H_j(x_n(k,\theta))}{(\theta \log n)^{j/2}}
\right|
= 0.
$$
Here, $x_n(k,\theta) = \frac{k - \theta \log n}{\sqrt{\theta \log n}}$ and $H_j(x)$ is a polynomial of degree $3j$ given by
\begin{align}\label{def_G_ewens}
H_j(x) := H_j(x,\theta) = \frac {(-1)^j} {j!} \, \eee^{\frac 12 x^2} B_j(\widetilde {D_1},\ldots, \widetilde {D_j}) \eee^{-\frac 12 x^2},
\end{align}
where $B_j$ is the $j$-th Bell polynomial and $\widetilde{D_1}, \widetilde{D_2},\ldots$ are differential operators given by
\begin{align}\label{eq:D_def}
\widetilde{D_j} := \widetilde{D_j}(\theta)
=
\frac{1}{(j+1)(j+2)} \left(\frac{\dd}{\dd x}\right)^{j+2} + \widetilde{\chi_j} (0) \left(\frac{\dd}{\dd x}\right)^{j}
\end{align}
with $\widetilde{\chi_j} (\beta) =  - \left(\frac{\dd}{\dd \beta}\right)^j \log \Gamma(\theta\eee^{\beta})$ and $\Gamma$ denoting the Euler gamma function.
\end{theorem}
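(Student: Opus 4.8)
The plan is to apply the general Edgeworth expansion for profiles from \cite{Kabluchko+Marynych+Sulzbach:2016} to the sequence $K_n(\theta)$, exploiting the representation \eqref{eq:K_n_theta_rep} as a sum of independent Bernoulli variables with success probabilities $\theta/(\theta+i-1)$. First I would compute the cumulant generating function of $K_n(\theta)$: using \eqref{eq:K_n_theta_rep},
\[
\log \E\bigl[\eee^{s K_n(\theta)}\bigr] = \sum_{i=1}^n \log\!\left(1 + \frac{\theta}{\theta+i-1}(\eee^s - 1)\right)
= \log\frac{\Gamma(\theta \eee^s + n)\,\Gamma(\theta)}{\Gamma(\theta \eee^s)\,\Gamma(\theta + n)},
\]
the last identity being the key algebraic simplification (it follows from $\prod_{i=1}^n(\theta \eee^s + i - 1)/(\theta + i - 1) = \theta^{(n)}(\theta \eee^s)/\theta^{(n)}(\theta)$ after writing $1 + \frac{\theta}{\theta+i-1}(\eee^s-1) = \frac{\theta \eee^s + i - 1}{\theta + i - 1}$). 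Stirling's asymptotics for the ratio $\Gamma(\theta \eee^s + n)/\Gamma(\theta + n)$ then gives, with $\beta = s$ held fixed,
\[
\log \E\bigl[\eee^{s K_n(\theta)}\bigr] = (\eee^s - 1)\theta \log n - \log\Gamma(\theta \eee^s) + \log\Gamma(\theta) + o(1),
\]
and more precisely a full asymptotic expansion in powers of $1/n$ whose leading term is linear in $\log n$. This is exactly the structure required by the profile Edgeworth theorem: the cumulants of $K_n(\theta)$ grow like constants times $\log n$, with the $j$-th cumulant governed by the $j$-th derivative at $0$ of $(\eee^\beta - 1)\theta \log n - \log\Gamma(\theta \eee^\beta)$.

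Next I would identify the coefficients. Differentiating, the $j$-th cumulant of $K_n(\theta)$ is
\[
\kappa_j(K_n(\theta)) = \theta \log n \cdot \left(\frac{\dd}{\dd\beta}\right)^j \eee^\beta \Big|_{\beta = 0} - \left(\frac{\dd}{\dd\beta}\right)^j \log\Gamma(\theta \eee^\beta)\Big|_{\beta=0} + O(1/n)
= \theta \log n + \widetilde{\chi_j}(0) + O(1/n)
\]
for $j \ge 1$ (since $\bigl(\dd/\dd\beta\bigr)^j \eee^\beta|_0 = 1$), which pins down the $\widetilde{\chi_j}(0)$ appearing in \eqref{eq:D_def}. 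The term $\frac{1}{(j+1)(j+2)}(\dd/\dd x)^{j+2}$ in $\widetilde{D_j}$ should emerge from the standard Edgeworth bookkeeping: when one expands $\exp\{\sum_{j\ge 3} \kappa_j s^j / j!\}$ around the Gaussian and re-centers/re-scales by $\sqrt{\theta \log n}$, the pure-$\log n$ part of the $j$-th cumulant contributes a $(\dd/\dd x)^{j}$-type correction with combinatorial coefficient $1/(j+1)(j+2)$ after the shift of the mean by the $O(1)$ cumulant corrections is absorbed; the $\widetilde{\chi_j}(0)(\dd/\dd x)^j$ part comes from the $O(1)$ pieces. Organizing these via the Bell polynomial identity \eqref{eq:bell_poly_def1} and the Hermite representation $\eee^{x^2/2}(-\dd/\dd x)^m \eee^{-x^2/2} = \Herm_m(x)$ yields precisely the formula \eqref{def_G_ewens} for $H_j$. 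Uniformity in $\theta \in L$ is inherited from the fact that all the $O(1/n)$ and $o(1)$ error terms in the cumulant expansion are controlled uniformly on the compact set $L$, because the relevant functions $\beta \mapsto \log\Gamma(\theta \eee^\beta)$ and their derivatives are jointly continuous, hence bounded, on $L \times [-\delta,\delta]$; uniformity in $k$ over the full range $\{1,\dots,n\}$ is built into the cited profile expansion theorem, which controls the tail regions where $|x_n(k,\theta)|$ is large through the Gaussian factor.

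The main obstacle is \emph{verifying the hypotheses of the general profile Edgeworth theorem of \cite{Kabluchko+Marynych+Sulzbach:2016} in the present normalization} and then matching its output, stated in whatever intrinsic form that paper uses, to the explicit $H_j$ in \eqref{def_G_ewens}. Concretely, one must: (i) check the required analyticity/growth conditions on the (normalized) characteristic function $\E[\eee^{\mathrm{i} t K_n(\theta)}]$ away from $t = 0$ — a lattice-distribution Edgeworth expansion, so one needs the standard non-degeneracy that $K_n$ is not supported on a coarser sublattice, which is clear here, plus control of the characteristic function for $t$ bounded away from $0$ and $2\pi$, uniformly in $n$ and $\theta \in L$; and (ii) carry out the somewhat delicate translation of cumulant asymptotics into the operator $\widetilde{D_j}$ and then into $H_j$, making sure the degree count (that $H_j$ has degree $3j$, which is forced because $\widetilde{D_j}$ raises polynomial degree by at most $j+2$ via two sources and the Bell polynomial $B_j$ composes $j$ of them — but the leading behavior in the relevant scaling gives $3j$) comes out correctly. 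Once the bridge to \cite{Kabluchko+Marynych+Sulzbach:2016} is in place, the remaining work is the routine cumulant computation sketched above.
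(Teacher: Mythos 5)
Your overall route is the same as the paper's: both reduce the theorem to the general profile Edgeworth expansion (Theorem 2.1) of \cite{Kabluchko+Marynych+Sulzbach:2016}, and your key identity $\E[\eee^{sK_n(\theta)}]=\Gamma(\theta\eee^s+n)\Gamma(\theta)/(\Gamma(\theta\eee^s)\Gamma(\theta+n))$ is exactly the computation of the normalized generating function $W_n(\beta)$ on which the paper's verification of the hypotheses A1--A3 of that theorem rests. However, two of the items you merely flag as ``obstacles'' are genuine gaps rather than routine bookkeeping. First, the hypothesis of A4 type --- decay of the tilted generating function at complex arguments $\beta+\mathrm{i}u$ with $u$ bounded away from $0$ (and $2\pi$), uniformly over compact sets of $\beta$ --- has to be proved, not just noted: the paper does this by bounding $\bigl|\Gamma(\theta\eee^{\beta+\mathrm{i}u}+n)/\Gamma(\theta+n)\bigr|$ so that the whole normalized quantity is $O\bigl(n^{\theta\eee^{\beta}(\cos a-1)}\bigr)$, i.e.\ decays like a negative power of $n$ and hence beats every power of $\log n$. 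Without some such gamma-ratio estimate the cited theorem simply does not apply, and your sketch supplies none.

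Second, and more substantively, your argument for uniformity in $\theta\in L$ does not work as stated. The cited Theorem 2.1 concerns a single sequence of profiles with its own normalization $w_n$ and limit function $W_\infty$; when $\theta$ varies, the profile, $w_n=\theta\log n$ and $W_\infty$ all change, so uniformity over $\theta$ is not ``inherited'' from the theorem, and observing that the cumulant error terms are uniformly $O(1/n)$ on $L$ does not bridge this, since the constants in the cited expansion are not tracked as functions of the input data. The paper's device is different and is the missing idea: apply the theorem once, for $\theta=1$ (profile $\stirling{n}{k}/n!$), exploiting the uniformity in the tilt parameter $\beta$ over the compact set $K=\log L$ that the theorem does provide; then substitute $\beta=\log\theta$, convert $\Gamma(\theta)\theta^k\stirling{n}{k}/(n^{\theta-1}n!)$ into $\P\{K_n(\theta)=k\}$ by Stirling's formula, and identify the theorem's polynomials with the stated ones via $\widetilde{\chi_j}(0)=\chi_j(\log\theta)$, $D_j(\log\theta)=\theta^{-j/2}\widetilde{D_j}(\theta)$ and the weighted homogeneity of the Bell polynomials, giving $G_j(x;\log\theta)=\theta^{-j/2}H_j(x)$. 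This last identification is also part of the work you defer; for fixed $\theta$ your plan is sound, but as written it neither establishes the uniformity in $\theta$ claimed in the statement nor pins down the explicit form of $H_j$.
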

\begin{remark}\label{rem:H_i}
It follows from~\eqref{eq:Bell_poly_first}, \eqref{def_G_ewens} and \eqref{eq:D_def}
that the first three coefficients of the expansion are given by
\begin{align*}
H_0(x)
&=
1,\label{eq:G0_stirling}\\
H_1(x)
&=
- \frac{\Gamma'(\theta)}{\Gamma(\theta)} \theta x + \frac{1}{6} \Herm_3(x),\\
H_2(x)
&=
\left(\theta^2 \frac{\Gamma'^2(\theta)}{\Gamma^2(\theta)} -  \frac{\theta^2\Gamma''(\theta)+\theta\Gamma'(\theta)}{2\Gamma(\theta)} \right)\Herm_2(x)
+
\left(\frac{1}{24} - \frac{\Gamma'(\theta)}{6\Gamma(\theta)}\theta\right) \Herm_4(x)\\
&+
\frac{1}{72}\Herm_6(x).
\end{align*}
An expression for $\widetilde {\chi_j}(0)$ involving  polygamma functions and  Stirling numbers of the second kind will be given in~\eqref{eq:chi_tilde_explicit}. The tilde in $\widetilde {D_j}$ and $\widetilde{\chi_j}$ is needed to keep the notation consistent with the paper~\cite{Kabluchko+Marynych+Sulzbach:2016}.  It is easy to check that $H_j(-x) = (-1)^j H_j(x)$; see~\cite[Remark~2.4]{Kabluchko+Marynych+Sulzbach:2016}.

To compute $H_j(x)$ one can proceed as follows. First, express $\frac {1}{j!}B_j(\widetilde {D_{1}}, \ldots, \widetilde{D_j})$ as a polynomial in $D:= \frac{\dd}{\dd x}$ (and note that only even/odd powers of $D$ are present if $j$ is even/odd). Then replace each occurrence of $D^l$ by $\Herm_l(x)$; see~\eqref{eq:Herm} for justification.
\end{remark}

\begin{remark}
It is possible to choose the value of $\theta$ as a function of $k$. One natural choice is $\theta = 1$ which provides a full version of Louchard's expansion~\cite[Theorem~2.1]{louchard_first_kind} (although he used a slightly different normalization in his analogue of $x_n(k,1)$ and his term $-355 x^3/144$ should be replaced by $-47 x^3/144$). Another possible choice is $\theta = k/\log n$ (so that $x_n(k,\theta)=0$),  which gives a large-deviation-type expansion valid uniformly in the region $\eta^{-1} \log n < k < \eta \log n$, for fixed $\eta >1$  and $q\in\N_0$:
$$
\frac{(k/\log n)^k}{(k/\log n)^{(n)}} \stirling{n}{k} =  \frac{1}{\sqrt{2\pi k}}\sum_{s=0}^{q} \frac{H_{2s}(0, k/\log n)}{k^s}  + o\left(\frac 1 {(\log n)^{q+1}} \right).
$$
Observe that the terms with half-integer powers of $k$ are not present in the sum because $H_{2j+1}(0)=0$. Using the formula
$$
\frac{\Gamma(n+\theta)}{n!} = n^{\theta-1} \left(1 + O\left(\frac 1n \right)\right)
$$
yields the expansion
\begin{equation}\label{eq:large_dev_expansion}
\frac 1 {n!} \stirling{n}{k}   =    \frac{1}{ \Gamma (\theta)}   n^{\theta -\theta \log \theta- 1}
\left(\frac{1}{\sqrt{2\pi k}}
\sum_{s=0}^{q} \frac{H_{2s}(0, \theta)}{k^s}  + o\left(\frac 1 {(\log n)^{q+1}} \right)\right)
\end{equation}
valid as $n\to\infty$ uniformly over $k$ in the region $\theta = k/\log n \in (\eta^{-1},\eta)$. In this region, this expansion must be equivalent to Hwang's result~\cite[Theorem~2]{hwang_stirling}.  It is not easy to rigorously verify this equivalence by a direct comparison, but we checked using Mathematica~9 that the first three non-trivial terms coincide. Note a misprint in the formula for the remainder term $Z_{\mu}(m,n)$ in~\cite[Theorem~2]{hwang_stirling}: $(\log n)^m/ (m! n)$ should be replaced by $(\log n)/(mn)$.
Expansion~\eqref{eq:large_dev_expansion}  could be also deduced from the work of~\citet[Theorem 3.4]{feray_meliot_nikeghbali}.
\end{remark}
%An Edgeworth expansion for the cumulative distribution function of $K_n(\theta)$ up to a term $O((\log n)^{-1})$ was recently obtained by Yamato in %\cite{Yamato:2013}:
%$$
%\P\left\{\frac{K_n(\theta) - \theta \log n}{\sqrt{\theta \log n}} \leq x\right\} = \Phi(x) + \frac{\eee^{-x^2/2}}{\sqrt{2\pi\theta \log %n}}\left(-\frac{x^2-1}{6}+\theta \frac{\Gamma'(\theta)}{\Gamma(\theta)}\right) + O\left(\frac 1 {\log n}\right),
%$$
%uniformly in $x\in\R$, where $\Phi(x)$ is the standard normal distribution function.
Taking sums over $k$ in Theorem~\ref{theo:stirling_edgeworth} and using the Euler-Maclaurin formula to approximate Riemann sums by integrals, one obtains that
\begin{multline*}
\P\left\{\frac{K_n(\theta) - \theta \log n}{\sqrt{\theta \log n}} \leq x\right\} = \Phi(x)
\\
+ \frac{\eee^{-x^2/2}}{\sqrt{2\pi\theta \log n}}\left(\frac 12 - \frac{x^2-1}{6} + \theta \frac{\Gamma'(\theta)}{\Gamma(\theta)}\right)
+ O\left(\frac 1 {\log n}\right),
\end{multline*}
uniformly in $x\in (\theta\log n)^{-1/2}(\Z-\theta \log n)$, where $\Phi(x)$ is the standard normal distribution function. The justification is the same as in~\cite[Proposition~2.5]{gruebel_kabluchko_BRW} and is therefore omitted. {\citet{Yamato:2013} recently stated a slightly incorrect version of this expansion missing the term $1/2$ which comes from the Euler-Maclaurin formula.}
Similarly, one can obtain further terms in the expansion of the distribution function of $(K_n(\theta)-\theta \log n) / \sqrt{\theta \log n}$.

\subsection{Mode and maximum of the Ewens distribution}
Theorem~\ref{theo:stirling_edgeworth} allows us to deduce various results on the \emph{mode} and the \emph{maximum} of the Ewens distribution. The mode is any value $k\in\{1,\ldots,n\}$ maximizing $\P\{K_n(\theta) = k\}$, while the maximum $M_n(\theta)$ is defined by
\begin{equation*}%\label{eq:width_mode_def_BST}
M_n(\theta) = \max_{1\leq k\leq n} \P\{K_n(\theta)=k\}.
%\quad
%u_n(\theta) = \min \left\{k\in\Z\colon \P\{K_n(\theta)=k\} = M_n(\theta)\right\}.
\end{equation*}
Let us denote the least mode by $u_n(\theta)$.
In this context, it is important to note that, for all $\theta > 0$,  the function $k \mapsto \P\{K_n(\theta)=k\}$ is log-concave  by a theorem attributed to Newton~\cite{hammersley,sibuya}, and
\begin{multline}\label{eq:unimodal}
\P\{K_n(\theta) = 1\} < \ldots < \P\{K_n(\theta) = u_n(\theta)\}
\\
\geq   \P\{K_n(\theta) = u_n(\theta)+1\} > \ldots > \P\{K_n(\theta) = n\}.
\end{multline}
In particular, there are at most two modes.
%In particular, the Ewens distribution is unimodal meaning that
%$$
%$$
%Put differently, for any mode $u_n(\theta)$, we have
%\begin{align*}
%& \P\{K_n(\theta)=k\} \leq \P\{K_n(\theta)=\ell\}, \quad 1 \leq k \leq \ell \leq u_n(\theta), \\
%& \P\{K_n(\theta)=k\} \geq \P\{K_n(\theta)=\ell \}, \quad u_n(\theta) \leq k \leq \ell \leq n.
%\end{align*}}
For $\theta=1$, Erd\"{o}s~\cite{erdoes_on_hammersley}, proving a conjecture of Hammersley~\cite{hammersley}, showed that the mode is unique for all $n\geq 3$. {By~\eqref{eq:unimodal}}, uniqueness also holds for irrational $\theta$; however, for rational $\theta$ the mode need not be unique since for example
$$
\frac 23 \stirling{3}{1} = \left(\frac 23\right)^2 \stirling{3}{2} > \left(\frac 23\right)^3 \stirling{3}{3}.
$$
%Instead we have the following weaker result:
%\begin{lemma}
%For every $\theta>0$ there is $N_0\in\N$ such that for all $n>N_0$ the value of $k$ maximizing $\P\{K_n(\theta) = k\}$ is %unique.
%\end{lemma}
%where, if the $\argmax$ is not uniquely defined, we agree to take the smallest value.
%It is known~\cite{lieb} that that the following log-concavity property holds for all $k=2,\ldots,n-1$:
%$$
%\stirling{n}{k}^2 > \stirling{n}{k-1} \stirling{n}{k+1} \frac{k}{k-1} \frac{n-k+1}{n-k}.
%$$
%This implies the unimodality of the Ewens distribution:
%$$
%\theta^1 \stirling{n}{1} < \ldots < \theta^{u_n(\theta)} \stirling{n}{u_n(\theta)} \geq \theta^{u_n(\theta)+1} %\stirling{n}{u_n(\theta)+1} >\ldots > \theta^n \stirling{n}{n}.
%$$
%It follows from the representation~\eqref{eq:K_n_theta_rep}   that the distribution of $K_n(\theta)$  (being a convolution of Bernoulli distributions) is unimodal (and even log-concave~\cite{keilson_gerber}). In fact, there are at most two values of $k$  maximizing $\P\{K_n(\theta) = k\}$:
%\begin{lemma}
%We have
%and, in particular, the distribution of $K_n(\theta)$ is unimodal.
%\end{lemma}
%Indeed, since all roots of the polynomial $(\theta x)^{(n)} = \sum_{k=1}^n \theta^k \stirling{n}{k} x^k$ are real, the claim follows by Newton's inequality~\cite[pp.~104--105]{hardy_littlewood_polya_book} in the same way as in the work of~\citet{lieb} who considered $\theta=1$.

\begin{theorem}\label{theo:stirling_mode_maximum}
Fix $\theta>0$. There exists $N_1\in\N$ such that for $n\geq N_1$, the mode $u_n(\theta)$ of the Ewens distribution with parameter $\theta$ is unique and equals one of the numbers $\lfloor u_n^*(\theta)\rfloor$ or $\lceil u_n^*(\theta)\rceil$, where
\begin{equation}\label{eq:u_n_star_def}
%\left\lfloor \theta\log n - \theta \Gamma'(\theta)/\Gamma(\theta) - 1/2 \right\rfloor
%\text{ or }
%\left\lceil \theta \log n - \theta  \Gamma'(\theta)/\Gamma(\theta) - 1/2 \right\rceil.
u_n^*(\theta) = \theta\log n - \frac{\theta \Gamma'(\theta)}{\Gamma(\theta)} - \frac 12
\end{equation}
and $\lfloor \cdot \rfloor$, $\lceil \cdot \rceil$ denote the floor and the ceiling functions, respectively.
Write $\delta_n(\theta) := \min_{k\in\Z} |u_n^{*}(\theta) - k|$. For the maximum $M_n(\theta)$, we have
$$
 \sqrt{2\pi \theta \log n} \; M_n(\theta) = 1 + \frac{\theta (\log \Gamma)'(\theta) + \theta^2 (\log \Gamma)''(\theta) +1/12 - \delta_n^{2}(\theta) }{2\theta\log n} + o\left(\frac 1 {\log n}\right).
$$
\end{theorem}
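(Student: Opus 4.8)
The plan is to localise everything to a window $|k-\theta\log n|=O(1)$ around the peak and feed the uniform Edgeworth expansion of Theorem~\ref{theo:stirling_edgeworth} into the log-concavity \eqref{eq:unimodal}. Write $\epsilon_n=(\theta\log n)^{-1/2}$ and $x=x_n(k,\theta)$. Since, by \eqref{eq:unimodal}, the mode is the first place the forward log-difference $D_n(k):=\log\P\{K_n(\theta)=k+1\}-\log\P\{K_n(\theta)=k\}$ changes from positive to negative, only $k$ with $k-\theta\log n=O(1)$ are relevant. For such $k$ I would apply Theorem~\ref{theo:stirling_edgeworth} with $r=3$, take logarithms, and Taylor-expand; using the first coefficients of Remark~\ref{rem:H_i} --- notably $H_1(0)=0$, $H_1'(x)=H_1'(0)+x^2/2$ with $H_1'(0)=-\theta\Gamma'(\theta)/\Gamma(\theta)-\tfrac12$, and $H_2'(0)=0$ ($H_2$ even) --- this reduces to
$$
D_n(k)=-\epsilon_n x+\epsilon_n^2\Big(H_1'(0)-\tfrac12\Big)+o(\epsilon_n^3),\qquad k-\theta\log n=O(1).
$$

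Next I would locate the mode. Put $m=\lfloor u_n^*(\theta)\rfloor$ and $t=\{u_n^*(\theta)\}\in[0,1)$; as $u_n^*(\theta)=\theta\log n+H_1'(0)$ (from \eqref{eq:u_n_star_def}), one has $x_n(m,\theta)=\epsilon_n(H_1'(0)-t)$, and the previous display gives, for $n$ large, $D_n(m-1)=\epsilon_n^2(t+\tfrac12)+o(\epsilon_n^3)>0$, $D_n(m+1)=\epsilon_n^2(t-\tfrac32)+o(\epsilon_n^3)<0$, and $D_n(m)=\epsilon_n^2(t-\tfrac12)+o(\epsilon_n^3)$. By \eqref{eq:unimodal} the first two inequalities alone force the least mode $u_n(\theta)$ into $\{m,m+1\}=\{\lfloor u_n^*(\theta)\rfloor,\lceil u_n^*(\theta)\rceil\}$; the sign of $D_n(m)$ then selects the integer nearer to $u_n^*(\theta)$ whenever $t$ is not within $o(1)$ of $\tfrac12$, so in every case $|u_n(\theta)-u_n^*(\theta)|=\delta_n(\theta)+o(1)$ (and when $t$ is within $o(1)$ of $\tfrac12$, so is $\delta_n(\theta)$, and the identity is trivial).

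For the maximum, set $u_n=u_n(\theta)$, $\sigma_n=\sgn(u_n-u_n^*(\theta))\in\{-1,1\}$, $\mu_n=H_1'(0)+\sigma_n\delta_n(\theta)$, so that $x_n(u_n,\theta)=\epsilon_n\mu_n+o(\epsilon_n)$ by the last step. Plugging $x=\epsilon_n\mu_n+o(\epsilon_n)$ into Theorem~\ref{theo:stirling_edgeworth} with $r=2$ and expanding ($\eee^{-x^2/2}=1-\tfrac{x^2}{2}+O(x^4)$; $\epsilon_n H_1(x)=\epsilon_n^2\mu_n H_1'(0)+o(\epsilon_n^2)$ by $H_1(0)=0$; $\epsilon_n^2 H_2(x)=\epsilon_n^2 H_2(0)+o(\epsilon_n^2)$) gives
$$
\sqrt{2\pi\theta\log n}\;M_n(\theta)=1+\epsilon_n^2\Big(-\tfrac{\mu_n^2}{2}+\mu_n H_1'(0)+H_2(0)\Big)+o(\epsilon_n^2).
$$
The essential point is that the $\sigma_n$-dependent cross-term cancels: $-\tfrac{\mu_n^2}{2}+\mu_n H_1'(0)=\tfrac12 H_1'(0)^2-\tfrac12\delta_n^2(\theta)$. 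One then reads off $H_2(0)=-\theta^2\Gamma'(\theta)^2/\Gamma(\theta)^2+\theta^2\Gamma''(\theta)/(2\Gamma(\theta))-\tfrac1{12}$ from Remark~\ref{rem:H_i} (using $\Herm_2(0)=-1$, $\Herm_4(0)=3$, $\Herm_6(0)=-15$) and checks, via $(\log\Gamma)''=\Gamma''/\Gamma-(\Gamma'/\Gamma)^2$, the identity $H_1'(0)^2+2H_2(0)=\theta(\log\Gamma)'(\theta)+\theta^2(\log\Gamma)''(\theta)+\tfrac1{12}$; since $\tfrac{\epsilon_n^2}{2}=\tfrac1{2\theta\log n}$, this is exactly the claimed expansion of $M_n(\theta)$.

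Finally, uniqueness. For irrational $\theta$ it is automatic, since a tie $\P\{K_n(\theta)=k\}=\P\{K_n(\theta)=k+1\}$ forces $\theta=\stirling{n}{k+1}/\stirling{n}{k}\in\mathbb{Q}$. For rational $\theta=p/q$ (in lowest terms), by the preceding analysis a second mode can occur only if both $\lfloor u_n^*(\theta)\rfloor$ and $\lceil u_n^*(\theta)\rceil$ are modes, i.e.\ $D_n(\lfloor u_n^*(\theta)\rfloor)=0$; from the expansion of $D_n$ (and its higher-order refinements) this constrains $\{u_n^*(\theta)\}$ to lie within $O(\epsilon_n^2)$ of $\tfrac12$, while at the same time the integrality identity $p\,\stirling{n}{\lfloor u_n^*(\theta)\rfloor+1}=q\,\stirling{n}{\lfloor u_n^*(\theta)\rfloor}$ must hold. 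Ruling this out for all large $n$ --- equivalently, showing that the ratios $\stirling{n}{k+1}/\stirling{n}{k}$, which are strictly decreasing in $k$ by strict log-concavity of the Stirling numbers, avoid $1/\theta$ at $k=\lfloor u_n^*(\theta)\rfloor$ once $n$ is large --- is the step I expect to be the genuine obstacle: since $\{u_n^*(\theta)\}$ is equidistributed mod $1$ it does come $O(\epsilon_n^2)$-close to $\tfrac12$ infinitely often, so this last exclusion seems to require an arithmetic input (e.g.\ on $p$-adic valuations of Stirling numbers) or a quantitative sharpening of the expansion rather than the Edgeworth expansion alone.
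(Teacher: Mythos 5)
Your derivation of the ``floor or ceiling'' statement and of the expansion of $M_n(\theta)$ is correct, and it is essentially the same computation that the paper outsources: where you expand the log-differences $D_n(k)$ and the value at the mode directly from Theorem~\ref{theo:stirling_edgeworth}, the paper simply cites Theorems~2.10 and~2.13 of \cite{Kabluchko+Marynych+Sulzbach:2016}. Your numerics check out: $H_1'(0)=-\theta\Gamma'(\theta)/\Gamma(\theta)-\tfrac12$, $H_2(0)=-\theta^2\Gamma'(\theta)^2/\Gamma(\theta)^2+\theta^2\Gamma''(\theta)/(2\Gamma(\theta))-\tfrac1{12}$, the cancellation of the $\sigma_n$-cross-term, and the identity $H_1'(0)^2+2H_2(0)=\theta(\log\Gamma)'(\theta)+\theta^2(\log\Gamma)''(\theta)+\tfrac1{12}$ all hold, and combining the two sign evaluations $D_n(m-1)>0>D_n(m+1)$ with the monotonicity of the ratios coming from log-concavity (i.e.\ \eqref{eq:unimodal}) does pin the mode to $\{\lfloor u_n^*(\theta)\rfloor,\lceil u_n^*(\theta)\rceil\}$ without any separate localization argument. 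Also $|u_n(\theta)-u_n^*(\theta)|=\delta_n(\theta)+O(1/\log n)$ as you argue, which is all that is needed since $\delta_n$ enters only through $\delta_n^2/(2\theta\log n)$.

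The genuine gap is uniqueness for rational $\theta$, which is part of the statement and which you explicitly leave open. You are right that no amount of Edgeworth sharpening or equidistribution of $\{u_n^*(\theta)\}$ can settle it, since a tie is an exact arithmetic event; the missing input is exactly the divisibility argument of Erd\H{o}s that the paper uses. Write $\theta=Q_1/Q_2$ and $k_n=\lceil u_n^*(\theta)\rceil$; by \eqref{eq:unimodal} a double mode can only be a tie between $k_n-1$ and $k_n$, i.e.\ $Q_1\stirling{n}{k_n}=Q_2\stirling{n}{k_n-1}$. By the prime number theorem with an error term there is, for all large $n$, a prime $p$ with $(n-1)/k_n<p<(n-1)/(k_n-1)$, so that $\{1,\dots,n-1\}$ contains exactly $k_n-1$ multiples of $p$. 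In the representation $\stirling{n}{k}=\sum_{1\le a_1<\dots<a_{n-k}\le n-1}a_1\cdots a_{n-k}$, a product with $n-k_n$ factors omits exactly $k_n-1$ elements, so precisely one such product (the one omitting all multiples of $p$) is coprime to $p$, giving $\stirling{n}{k_n}\not\equiv 0\pmod p$; a product with $n-k_n+1$ factors omits only $k_n-2$ elements, so every such product contains a multiple of $p$, giving $\stirling{n}{k_n-1}\equiv 0\pmod p$. Since $p\to\infty$, eventually $p\nmid Q_1$, and the tie is impossible; for irrational $\theta$ uniqueness is immediate as you note. Without this (or an equivalent arithmetic argument), your proposal proves the location of the mode and the formula for $M_n(\theta)$, but not the uniqueness assertion of the theorem.
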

In the case $\theta=1$,  related results for the mode were derived by \citet{hammersley} and \citet{erdoes_on_hammersley}, see also \citet{cramer} for statistical applications and~\citet{mezoe} for a generalization and an overview.
Theorem~\ref{theo:stirling_mode_maximum} states that the  mode is one of the numbers $\lfloor \log n + \gamma -\frac 12\rfloor$ or $\lceil \log n + \gamma -\frac 12\rceil$, for sufficiently large $n$. In fact, this holds for all $n\in\N$.
\begin{proposition}\label{prop:mode_stirling}
 %\begin{equation}\label{eq:mode_stirling_for_all_n}
$
 %u_n(1) \in \left\{\left\lfloor \log n + \gamma -\frac 12\right\rfloor, \left\lceil \log n + \gamma -\frac 12\right\rceil\right\}.
 u_n(1) \in \{\lfloor \log n + \gamma -\frac 12\rfloor, \lceil \log n + \gamma -\frac 12 \rceil \}
$
for all $n\in \N$.
%\end{equation}
\end{proposition}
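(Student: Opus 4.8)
The plan is to combine the asymptotic statement of Theorem~\ref{theo:stirling_mode_maximum} (specialized to $\theta = 1$, where $\Gamma'(1)/\Gamma(1) = -\gamma$, so that $u_n^*(1) = \log n + \gamma - \tfrac12$) with a finite, explicit verification for the small values of $n$ not covered by the threshold $N_1$. By Theorem~\ref{theo:stirling_mode_maximum} there is some $N_1 \in \N$ such that for all $n \ge N_1$ the mode $u_n(1)$ equals $\lfloor \log n + \gamma - \tfrac12\rfloor$ or $\lceil \log n + \gamma - \tfrac12\rceil$, which is exactly the claimed dichotomy. So the only remaining task is to confirm the statement for $n < N_1$.

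First I would extract (or re-derive) an explicit admissible value of $N_1$ from the proof of Theorem~\ref{theo:stirling_mode_maximum}; the Edgeworth expansion in Theorem~\ref{theo:stirling_edgeworth} with $r$ fixed comes with effective $o(\cdot)$ bounds, so one can in principle make $N_1$ concrete, and experience with such expansions suggests $N_1$ is of very modest size (a few hundred at most, quite possibly much smaller). Then, for each $n$ in the finite range $1 \le n < N_1$, I would compute the integer $u_n(1)$ directly: by the log-concavity and unimodality recorded in~\eqref{eq:unimodal}, the mode is characterized by the sign changes of the ratio $\mathbb L_n(k+1)/\mathbb L_n(k) = \stirling{n}{k+1}/\stirling{n}{k}$ (here $\theta=1$, so the $\theta^k/\theta^{(n)}$ prefactor is just $1/n!$), and the Stirling numbers of the first kind for small $n$ are computed from the recurrence $\stirling{n+1}{k} = \stirling{n}{k-1} + n\stirling{n}{k}$. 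For each such $n$ one checks that the resulting $u_n(1)$ indeed lies in $\{\lfloor \log n + \gamma - \tfrac12\rfloor, \lceil \log n + \gamma - \tfrac12\rceil\}$; since $\log n + \gamma - \tfrac12$ is irrational for $n \ge 1$ (as $\gamma$ is expected to be irrational, but in any case $\log n + \gamma - \tfrac12 \notin \Z$ is all that is needed, and this follows because $\log n - \tfrac12 + \gamma = m \in \Z$ would force $\gamma = m + \tfrac12 - \log n$, a statement about $\gamma$ one can sidestep entirely by noting the floor and ceiling in the claim are only equal in that degenerate case, so the claim as an inclusion is trivially weaker there), the pair $\{\lfloor\cdot\rfloor,\lceil\cdot\rceil\}$ consists of two consecutive integers straddling $u_n^*(1)$.

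The main obstacle is purely bookkeeping: one must pin down $N_1$ explicitly and then carry out the finite check. This is where I expect the author either to cite a numerical computation (a table of $\stirling{n}{k}$ against the values $\lfloor \log n + \gamma - \tfrac12\rfloor$) or to appeal to the sharper results of Hammersley~\cite{hammersley} and Erd\H{o}s~\cite{erdoes_on_hammersley}, who already analyzed $u_n(1)$ for all $n$ and in particular established its uniqueness for $n \ge 3$; in that case the finite verification is subsumed by their work and the proposition follows by matching their description of $u_n(1)$ with the two-point set above. I would present the argument in this second form — cite the asymptotic half from Theorem~\ref{theo:stirling_mode_maximum}, cite or reproduce the classical finite-$n$ analysis for the remaining cases — since it avoids making the constant $N_1$ explicit and keeps the proof short.
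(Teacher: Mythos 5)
Your primary route has a genuine gap: the proposition claims the dichotomy for \emph{every} $n\in\N$, while Theorem~\ref{theo:stirling_mode_maximum} only gives it for $n\ge N_1$ with an ineffective $N_1$. That threshold comes from Theorem~2.10 of \cite{Kabluchko+Marynych+Sulzbach:2016}, whose remainders are genuine $o(\cdot)$ terms with no explicit constants; turning them into a concrete $N_1$ would mean redoing that Edgeworth analysis with effective error bounds, which is far more than the ``purely bookkeeping'' obstacle you describe. Without an explicit $N_1$ the finite verification you propose cannot even be started, so the argument as written does not establish the statement for all $n$.

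Your fallback --- appeal to Hammersley and Erd\H{o}s --- is essentially what the paper does, but you leave the decisive quantitative step unexecuted. The paper uses Hammersley's explicit formula~\eqref{eq:hammersley_mode_1}, namely $u_n(1)=\bigl\lfloor \log n+\gamma+\frac{\zeta(2)-\zeta(3)}{\log n+\gamma-3/2}+\frac{h(n)}{(\log n+\gamma-3/2)^2}\bigr\rfloor$ with $-1.1<h(n)<1.44$, checks by an elementary inequality that the correction term lies strictly between $-\tfrac12$ and $\tfrac12$ once $\log n+\gamma-\tfrac32>2.5$ (i.e.\ $n\ge 31$), which yields $u_n(1)\in\{\lfloor\log n+\gamma-\tfrac12\rfloor,\lceil\log n+\gamma-\tfrac12\rceil\}$, and then verifies $n=1,\dots,30$ directly by computer. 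Note that Erd\H{o}s's results do not ``subsume'' the finite check: uniqueness of the mode for $n\ge 3$ says nothing about its location, and his location statement ($\lfloor\log(n-1)+\tfrac12\rfloor$ or $\lfloor\log(n-1)+1\rfloor$) is only asserted for $n>189$, so an explicit small-$n$ verification is unavoidable in any variant. Once you supply Hammersley's quantitative bounds and that finite check, your proof coincides with the paper's; as it stands, the key inequality and the small-$n$ cases are missing.
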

The proof uses the following formula of~\citet{hammersley}:
\begin{equation}\label{eq:hammersley_mode_1}
u_n(1) = \left\lfloor  \log n + \gamma + \frac{\zeta(2)-\zeta(3)}{\log n + \gamma - \frac 32} + \frac{h(n)}{(\log n + \gamma - \frac 32)^2}\right\rfloor,
\end{equation}
for some  $-1.098011 < h(n) < 1.430089$; see also~\cite[Section~5.7.9]{hwang_diss} for a related formula.
\citet{erdoes_on_hammersley} observed that for $n>189$ Hammersley's formula implies that  the mode is one of the numbers $\lfloor \log (n-1) + \frac 12\rfloor $ or $\lfloor \log (n-1)+1\rfloor $. Note that his $\Sigma_{n,s}$ equals $\stirling{n+1}{n+1-s}$ and his $n-f(n)$ is $u_{n+1}(1)-1$ in our notation.  % With some misprints, this claim was repeated in~\cite{mezoe}.

The next theorem provides more precise information about the behavior of the mode.  Recall that a set $A \subset \N$ has \emph{asymptotic density} $\alpha \in [0,1]$ if
\begin{equation}\label{def:asymp_freq}
\lim_{n\to\infty} \frac {\# (A \cap \{1, \ldots, n\})}{n} = \alpha.
\end{equation}
For $x\in\R$, denote by $\{x\} = x - \lfloor x \rfloor$ the fractional part of $x$. Let $\nint(x)$ be the integer closest to $x$ (if $\{x\}=1/2$, we agree to take $\nint (x) = \lfloor x \rfloor$). That is,
$$
\nint(x):=\argmin_{k\in\Z}|x-k| = \left\lfloor x+\frac 12\right\rfloor.
$$

\begin{theorem}\label{theo:stirling_mode_further}
Fix $\theta>0$. The mode $u_n(\theta)$ of the Ewens distribution with parameter $\theta$ has the following properties:

\vspace*{2mm}
\noindent
\emph{(i)}
%
%for every $\varepsilon>0$ there exists $N_2\in\N$ such that for all $n\geq N_2$ satisfying $\min_{k\in\Z}|u_n^{\ast}(\theta)-k-\frac{1}{2}|>\varepsilon$, the mode $u_n(\theta)$ equals $\nint(u_n^{\ast}(\theta))$;
there exists a sufficiently large constant $C_0>0$  such that for all $n\in\N$ satisfying
$$
\left|\{u_n^*(\theta)\} - \frac 12\right| > \frac {C_0} {\log n},
$$
the mode $u_n(\theta)$ equals
$$
\nint(u_n^{\ast}(\theta)) = \left \lfloor  \theta\log n  - \frac{\theta \Gamma'(\theta)}{\Gamma(\theta)}\right\rfloor;
$$

\vspace*{2mm}
\noindent
\emph{(ii)}
there are arbitrarily long intervals of consecutive $n$'s for which $u_n(\theta)=\lceil u_n^{\ast}(\theta)\rceil$; similarly, there are arbitrarily long intervals of consecutive $n$'s for which $u_n(\theta)=\lfloor u_n^{\ast}(\theta)\rfloor$;

\vspace*{2mm}
\noindent
\emph{(iii)}
the set of $n\in\N$ such that $u_n(\theta)=\nint(u_n^{\ast}(\theta))$ has asymptotic density one;

\vspace*{2mm}
\noindent
\emph{(iv)}
there are infinitely many $n\in\N$ such that $u_n(\theta)\neq \nint(u_n^{\ast}(\theta))$.
\end{theorem}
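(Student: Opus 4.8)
The plan is to reduce all four assertions to a single sharp comparison of $g_n(k):=\P\{K_n(\theta)=k\}$ at the two candidate modes, combined with the elementary fact that $n\mapsto u_n^*(\theta)$ is strictly increasing with increments $u_{n+1}^*(\theta)-u_n^*(\theta)=\theta\log(1+1/n)\sim\theta/n\to0$; thus $\{u_n^*(\theta)\}$ winds around $\R/\Z$ in arbitrarily small steps and sweeps through every subinterval of $[0,1)$ over a run of consecutive $n$'s whose length tends to $\infty$. Write $L=\theta\log n$; when $u_n^*(\theta)\notin\Z$ set $k_0=\lfloor u_n^*(\theta)\rfloor$, so that $\{\lfloor u_n^*(\theta)\rfloor,\lceil u_n^*(\theta)\rceil\}=\{k_0,k_0+1\}$, and by Theorem~\ref{theo:stirling_mode_maximum} together with log-concavity \eqref{eq:unimodal} the mode $u_n(\theta)$ equals $k_0+1$ if $g_n(k_0+1)>g_n(k_0)$ and $k_0$ otherwise. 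The crux is to extract from Theorem~\ref{theo:stirling_edgeworth} (with a fixed $r\ge4$, so the additive error is $o(L^{-5/2})$) the estimate
\[
g_n(k_0+1)-g_n(k_0)=\frac{1}{\sqrt{2\pi}\,L^{3/2}}\Bigl(\{u_n^*(\theta)\}-\tfrac12+\tfrac{\nu(\theta)+o(1)}{L}\Bigr),
\]
uniformly over such $n$, obtained by Taylor-expanding the smooth profile $x\mapsto\eee^{-x^2/2}\sum_{j\le r}H_j(x)L^{-j/2}$ around $x_n(k_0+\tfrac12,\theta)=O(L^{-1/2})$; the parity $H_j(-x)=(-1)^jH_j(x)$ (in particular $H_2$ even, so $H_2'(0)=0$) is exactly what reduces the correction to $O(1/L)$, the $o(1)$ hides a bounded term which tends to $0$ whenever $\{u_n^*(\theta)\}\to\tfrac12$, and $\nu(\theta)$ is an explicit constant computable from $H_1,H_2,H_3$ (equivalently from the low-order cumulants of $K_n(\theta)$, hence from the polygamma function at $\theta$). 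For $\theta=1$, Hammersley's formula \eqref{eq:hammersley_mode_1} forces $\nu(1)=\zeta(2)-\zeta(3)>0$. The single substantial analytic input is the claim that $\nu(\theta)\neq0$ for every $\theta>0$.

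Granting the displayed estimate, \emph{(i)} is immediate: if $|\{u_n^*(\theta)\}-\tfrac12|>C_0/\log n$ with $C_0$ large enough, the bracket has the sign of $\{u_n^*(\theta)\}-\tfrac12$, so the mode is $\lfloor u_n^*(\theta)\rfloor$ when $\{u_n^*(\theta)\}<\tfrac12$ and $\lceil u_n^*(\theta)\rceil$ when $\{u_n^*(\theta)\}>\tfrac12$, i.e.\ it equals $\nint(u_n^*(\theta))=\lfloor\theta\log n-\theta\Gamma'(\theta)/\Gamma(\theta)\rfloor$; the case $u_n^*(\theta)\in\Z$ is trivial and the case of small $n$ is vacuous since $|\{x\}-\tfrac12|\le\tfrac12$.

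For \emph{(iii)}, by (i) the exceptional $n$ lie, for $n$ large, in $E=\{n:|\{u_n^*(\theta)\}-\tfrac12|\le C_0/\log n\}$. For $\sqrt N\le n\le N$ the increments of $u_n^*$ are $\le\theta/n$ and $C_0/\log n\le2C_0/\log N=:\delta_N$, so the set of $n\in[\sqrt N,N]$ with $u_n^*$ within $\delta_N$ of a fixed half-integer $m+\tfrac12$ is an interval of length $O(\delta_N n_m)+O(1)$, where $n_m=\exp\bigl((m+1+\theta\Gamma'(\theta)/\Gamma(\theta))/\theta\bigr)$ is the scale at which $u_n^*=m+\tfrac12$; summing the essentially geometric series $\sum_m n_m=O(N)$ over the $O(\log N)$ relevant $m$ gives $\#(E\cap[1,N])=O(N/\log N)=o(N)$, which is (iii). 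For \emph{(ii)}, by the same slow variation, for each large $m$ the $n$ with $u_n^*\in(m+\tfrac12+C_0/\log n,\,m+1)$ form an interval of consecutive integers of length $n_m(\eee^{1/(2\theta)}-1)+o(n_m)\to\infty$, on which $\{u_n^*\}>\tfrac12+C_0/\log n$ and $\lceil u_n^*\rceil=m+1$, so $u_n(\theta)\equiv m+1=\lceil u_n^*(\theta)\rceil$ by (i); the analogous runs with $u_n^*\in(m,\,m+\tfrac12-C_0/\log n)$ give $u_n(\theta)\equiv m=\lfloor u_n^*(\theta)\rfloor$.

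For \emph{(iv)} assume $\nu(\theta)>0$ (if $\nu(\theta)<0$, argue symmetrically with $\{u_n^*\}$ just above $\tfrac12$). For each large $m$ let $n=n(m)$ be the largest index with $u_n^*(\theta)<m+\tfrac12$; then $\{u_n^*(\theta)\}\in(\tfrac12-\theta/n,\tfrac12)\subset(\tfrac12-\tfrac{\nu(\theta)}{2L},\tfrac12)$ once $n\log n$ is large, so the bracket in the displayed estimate is $\ge\tfrac{\nu(\theta)}{2L}+o(1/L)>0$. Hence $g_n(k_0+1)>g_n(k_0)$, the mode equals $\lceil u_n^*(\theta)\rceil=k_0+1$, yet $\{u_n^*(\theta)\}<\tfrac12$ gives $\nint(u_n^*(\theta))=k_0\neq k_0+1$; letting $m\to\infty$ yields infinitely many $n$ with $u_n(\theta)\neq\nint(u_n^*(\theta))$. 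The step I expect to be the real obstacle is the displayed estimate, and within it the claim $\nu(\theta)\neq0$: the expansion itself is a long but mechanical calculation with Theorem~\ref{theo:stirling_edgeworth} (Taylor expansion plus Hermite-parity bookkeeping, with care that the $L^{-2}$-scale contribution cancels), but since the theorem asserts (iv) for \emph{every} $\theta>0$ one cannot avoid producing a closed form for $\nu(\theta)$ in terms of the polygamma function at $\theta$ and showing that it never vanishes, the value $\zeta(2)-\zeta(3)$ at $\theta=1$ being only a consistency check.
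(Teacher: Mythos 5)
Your reduction of all four parts to the two-term estimate for $\P\{K_n(\theta)=\lceil u_n^*\rceil\}-\P\{K_n(\theta)=\lfloor u_n^*\rfloor\}$ is exactly the paper's strategy: the paper derives the same expansion (its \eqref{proof_iv1}, obtained from Theorem~\ref{theo:stirling_edgeworth} with $r=4$ by writing $k=\theta\log n+a^*+g$), gets part (i) from the sign of $\{u_n^*(\theta)\}-\frac12$ with the $O(1/\log^2 n)$ correction, part (ii) from slow variation of $\log n$, and part (iii) from a direct counting bound on how often $u_n^*(\theta)$ falls near $\Z+\frac12$ (your geometric-series bookkeeping over the scales $n_m$ is a fine, slightly quantitatively sharper variant of the paper's estimate, which is needed precisely because equidistribution of $\{\alpha^{-1}\log k\}$ fails). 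Your plan for (iv) — pick $n$ with $\{u_n^*(\theta)\}$ just below $\frac12$ so that the second-order constant decides the comparison — is also the paper's argument, where that constant is $s^*(\theta)=P_\theta(a^*+\tfrac12)-P_\theta(a^*-\tfrac12)$, i.e.\ your $\nu(\theta)$.

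However, part (iv) as you present it has a genuine gap, and you flag it yourself: everything hinges on $\nu(\theta)\neq 0$ for \emph{every} $\theta>0$, and you neither compute $\nu(\theta)$ nor prove its nonvanishing (the value $\zeta(2)-\zeta(3)$ extracted from Hammersley's formula settles only $\theta=1$). This is not a routine loose end — it is the main analytic content of (iv), and without it the argument proves nothing for general $\theta$. The paper closes it by carrying out the five-term expansion explicitly (the coefficients $A_{11},A_{12},A_{21},A_{22},A_{31}$ in the Supplement), obtaining the closed form
\begin{equation*}
s^*(\theta)=\frac{\theta^2}{2}\left(2\psi^{(1)}(\theta)+\theta\,\psi^{(2)}(\theta)\right),
\end{equation*}
and then using the series representation of the polygamma functions to rewrite this as $s^*(\theta)=\theta^2\sum_{k\geq 1}k/(\theta+k)^3>0$. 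Note that this gives strict positivity, not merely nonvanishing, so near each half-integer crossing the mode is always $\lceil u_n^*(\theta)\rceil$ and your ``argue symmetrically if $\nu(\theta)<0$'' branch never occurs. To complete your proof you would have to supply this computation (or an equivalent one), since the sign of $\nu(\theta)$ cannot be inferred from the structure of the expansion alone.
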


The proof of part (iv) uses five terms in the Edgeworth expansion, where the first two terms influence the form of $u_n^*(\theta)$, while the remaining terms are needed for technical reasons. The idea is that the formula $u_n(\theta) = \nint(u_n^{\ast}(\theta))$ becomes wrong if the fractional part of $u_n^{*}(\theta)$ is slightly below $\frac 12$, so that higher order terms in the Edgeworth expansion decide which of the values $\lfloor u_n^*(\theta)\rfloor$ and $\lceil u_n^*(\theta)\rceil$ is the mode.    Using even more terms in the expansion, it is possible to replace $u_n^*(\theta)$ by some more complicated expressions involving higher-order corrections in inverse powers of $\theta \log n$ (see~\cite[Section~5.7.9]{hwang_diss}), but it seems that there is no formula of the form
$$
u_n(1) = \nint \left(\log n + a_0 + \frac {a_1}{\log n} + \ldots + \frac{a_r}{(\log n)^r}\right)
$$
which is valid for all sufficiently large $n$.
%It is natural to conjecture that (in some suitable sense) there is no closed formula for the mode of the Stirling numbers.

Finally, we would like to mention that one can easily obtain counterparts of the above results for the $B$- and $D$-analogues of Stirling numbers of the first kind. These are defined as the coefficients of $(x+1)(x+3)\ldots (x+2n-1)$ and $((x+1)(x+3)\ldots (x+2n-3)) (x+n-1)$, respectively, and appeared for example in~\cite{kabl_vys_zaporozhets_weyl_chambers}.

\section{Proofs}
\begin{proof}[Proof of Theorem \ref{theo:stirling_edgeworth}] The proof follows from the general Edgeworth expansion for random or deterministic profiles provided by Theorem 2.1 in \cite{Kabluchko+Marynych+Sulzbach:2016}. We consider the sequence of ``profiles''
\begin{align*}
\bL_n(k):=\P\{K_n(\theta)=k\} = \frac{\theta^k}{\theta^{(n)}}\stirling{n}{k} \ind_{k \in \{1, \ldots, n\}},
\end{align*}
and define
$$
w_n:=\theta\log n,\quad \varphi(\beta):=\eee^{\beta}-1,
\quad
(\beta_{-},\beta_{+})=\R,
\quad \fD=\{z\in\C\colon |\Im z| <\pi \}.
$$
In order to apply Theorem 2.1 in \cite{Kabluchko+Marynych+Sulzbach:2016}, we need to check the conditions A1--A4 given in the beginning of Section 2 of the cited paper. Note that
\begin{align*}
W_n(\beta)&:=\eee^{-\varphi(\beta)w_n}\sum_{k\in\Z}\eee^{\beta k}\bL_n(k)=n^{-\theta(\eee^{\beta}-1)}\sum_{k=1}^{n}\eee^{\beta k}\frac{\theta^k}{\theta^{(n)}}\stirling{n}{k}\\
&= n^{-\theta(\eee^{\beta}-1)}\frac{(\theta\eee^{\beta})^{(n)}}{\theta^{(n)}}
 = n^{-\theta (\eee^{\beta}-1)} \frac{\Gamma(\theta\eee^\beta  + n) \Gamma(\theta)}{\Gamma(\theta\eee^\beta)\Gamma(\theta+n)}
\ton
\frac{\Gamma(\theta)}{\Gamma(\theta\eee^\beta)}=:W_{\infty}(\beta)
\end{align*}
locally uniformly in $\beta\in\fD$ with speed polynomial in $n^{-1}$. Hence conditions A1--A3 are satisfied. In order to check condition A4 it is enough to show that for every $a>0$, $r\in\N$ and every compact subset $K_1$ of $\R$
\begin{equation}\label{proof_thm_main2}
\sup_{\beta \in K_1} \sup_{a \leq u \leq \pi}\left(n^{-\theta (\eee^\beta -1)} \left|\frac{\Gamma(\theta\eee^{\beta + iu}  + n)\Gamma(\theta)}{\Gamma(\theta+n)\Gamma(\theta\eee^{\beta+iu})}\right|\right)=o(\log^{-r} n),\quad n\to\infty.
\end{equation}
But this easily follows from
\begin{align*}
\lefteqn{\sup_{\beta \in K_1} \left(n^{-\theta (\eee^\beta -1)} \sup_{a \leq u \leq \pi} \left|\frac{\Gamma(\theta\eee^{\beta + iu}  + n)\Gamma(\theta)}{\Gamma(\theta+n)\Gamma(\theta\eee^{\beta+iu})}\right|\right)}\\
&\leq
C\sup_{\beta \in K_1} \left(n^{-\theta (\eee^\beta -1)} \sup_{a \leq u \leq \pi} \left|\frac{\Gamma(\theta\eee^{\beta + iu}  + n)}{\Gamma(\theta+n)}\right|\right)\leq C_1 \sup_{\beta \in K_1} n^{\theta \eee^{\beta}(\cos a-1)},
\end{align*}
with constants $C,C_1$ depending on $K_1$, $\theta$ and $a$. Therefore, Theorem 2.1 of \cite{Kabluchko+Marynych+Sulzbach:2016} is applicable for the Ewens distribution with arbitrary fixed $\theta>0$. In particular, for $\theta=1$, we obtain
\begin{align}\label{proof_thm_main4}
(\log n)^{\frac{r+1}{2}} \sup_{\beta\in K} \sup_{1 \leq k \leq n} \left| \frac{\Gamma(\eee^{\beta}) \eee^{\beta k}}{n^{\eee^{\beta}-1} n!}\stirling{n}{k} - \frac{\eee^{-\frac 12 x_n^2(k,\eee^{\beta})}}{\sqrt{2 \pi \eee^{\beta} \log n}}
\sum_{j=0}^r \frac{G_j(x_n(k,\eee^{\beta}); \beta)}{(\log n)^{j/2}} \right| \ton 0,
\end{align}
where $K$ is a compact subset of $\R$ and the polynomials $G_0, G_1, \ldots$ are defined as in Theorem 2.1 of \cite{Kabluchko+Marynych+Sulzbach:2016}: for $j \in \N_0$, we have
\begin{equation}\label{eq:def_G}
G_j(x; \beta) = \frac{(-1)^j}{j!} \eee^{\frac 1 2 x^2} B_j(D_1, \ldots, D_j) \eee^{-\frac 1 2 x^2}
\end{equation}
with the differential operators
\begin{equation}\label{eq:D_def_no_tilde}
D_j := D_j(\beta) = \eee^{-\frac 1 2 \beta j}\left( \frac{1}{(j+1)(j+2)} \left ( \frac{\dd}{\dd x}\right)^{j+2} + \chi_j(\beta) \left(\frac{\dd}{\dd x}\right)^{j}\right),
\end{equation}
where
$$\chi_j(\beta) = - \left(\frac{\dd}{\dd \beta}\right)^{j} \log \Gamma (\eee^\beta).$$
 Now, if $L \subseteq (0,\infty)$ is compact, then $K:=\log L$ is compact in $\R$. Applying \eqref{proof_thm_main4} with $K= \log L$ and $\beta=\log \theta\in K$ we obtain
\begin{align*}
(\log n)^{\frac{r+1}{2}} \sup_{\theta\in L} \sup_{1 \leq k \leq n} \left| \frac{\Gamma(\theta) \theta^k}{n^{\theta-1} n!}\stirling{n}{k} - \frac{\eee^{-\frac 12 x_n^2(k,\theta)}}{\sqrt{2 \pi \theta \log n}}
\sum_{j=0}^r \frac{G_j(x_n(k,\theta); \log \theta)}{(\log n)^{j/2}} \right| \ton 0.
\end{align*}
By Stirling's formula, uniformly in $\theta \in L$, $n \in\N$ and  $1 \leq k \leq n$, we have
\begin{align}
\frac{\Gamma(\theta) \theta^k}{n^{\theta-1} n!}\stirling{n}{k} = \frac{\theta^k}{\theta^{(n)}} \stirling{n}{k} (1 + O(n^{-1}))=\frac{\theta^k}{\theta^{(n)}} \stirling{n}{k}+ O(n^{-1}).
\end{align}
We conclude the proof by noting that $G_j(x; \log \theta)=\theta^{-j/2}H_j(x)$  which follows directly from $\widetilde {\chi_j}(0) = \chi_j(\log \theta)$. Indeed, by comparing~\eqref{eq:D_def} and~\eqref{eq:D_def_no_tilde}, we obtain
$$
D_j(\log \theta) = \theta ^{-j/2} \widetilde {D_j}(\theta),
$$
which implies that
$$
B_j(D_1(\log \theta),\ldots,D_j(\log \theta)) = \theta^{-j/2} B_j(\widetilde {D_1}(\theta), \ldots, \widetilde {D_j}(\theta))
$$
since $B_j(z_1,\ldots,z_j)$ is a sum of terms of the form $c \cdot z_1^{i_1}z_2^{i_2} \ldots z_j^{i_j}$ with $1i_1+2i_2+ \ldots + ji_j =j$; see~\eqref{eq:bell_poly_def}. Comparing~\eqref{def_G_ewens} and~\eqref{eq:def_G}, we obtain the required identity $G_j(x; \log \theta)=\theta^{-j/2}H_j(x)$.

To see that $\widetilde {\chi_j}(0) = \chi_j(\log \theta)$, one can easily show by induction over $j \geq 1$ that both
\begin{equation*} %\label{eq:chi_tilde_explicit1}
\chi_j(\beta) =  - \sum_{\ell = 1}^{j} \stirlingb{j}{\ell} \psi^{(\ell -1)}(\eee^\beta) \eee^{ \ell \beta},
\end{equation*}
and
\begin{equation}\label{eq:chi_tilde_explicit}
\widetilde{\chi_j}(\beta) =  - \sum_{\ell = 1}^{j} \stirlingb{j}{\ell} \psi^{(\ell-1)}(\theta \eee^\beta) (\theta\eee^{\beta})^{\ell}.
\end{equation}
Here $\psi^{(j)}(x) = (\log \Gamma(x))^{(j+1)}$ denotes the polygamma function and $\stirlingb{n}{k}$ is the Stirling number of the second kind satisfying the recurrence
$$
\stirlingb{n+1}{k} = \stirlingb{n}{k-1} + k \stirlingb{n}{k}, \quad 1 \leq k \leq n, \;\; n \in\N,
$$
with initial conditions $\stirlingb{0}{0} = 1$, $\stirlingb{n}{0} = \stirlingb{0}{n} = 0$.
\end{proof}

\begin{proof}[Proof of Theorem \ref{theo:stirling_mode_maximum}]
It follows from Theorems~2.10 in \cite{Kabluchko+Marynych+Sulzbach:2016} that for sufficiently large $n$, the maximizers of the function $k\mapsto \P\{K_n(\theta) = k\}$ must be of the form $\lfloor  u_n^* \rfloor$ or $\lceil u_n^*\rceil$.

Next we prove that the maximizer is unique (for sufficiently large $n$) by following the method of \citet{erdoes_on_hammersley} who considered the case $\theta=1$.
{Due to~\eqref{eq:unimodal},} the uniqueness is evident if $\theta$ is irrational. Hence, we assume that $\theta = Q_1/Q_2$ is rational with $Q_1,Q_2$ being integer.
We have, by~\eqref{eq:stirling_def},
$$
\stirling{n}{k} = \sum_{1\leq a_1 <\ldots < a_{n-k} \leq n-1} a_1\ldots a_{n-k}.
$$
Put $k_n = \lceil u_n^*(\theta)\rceil = \theta \log n + O(1)$ as $n\to\infty$.
{By~\eqref{eq:unimodal}, it is sufficient} to show that
\begin{equation}\label{eq:mode_unique_have_to_show}
\theta^{k_n} \stirling {n}{k_n} \neq \theta^{k_n-1} \stirling {n}{k_n-1}.
\end{equation}
By the prime number theorem with an appropriate error term, see~\cite{erdoes_on_hammersley}, for all sufficiently large $n$ there is a prime number $p$ satisfying {$(n-1)/k_n < p < (n-1)/(k_n-1)$.}   Then,
$$
\stirling {n}{k_n}
%= \stirling {n}{\lceil u_n^*(\theta)\rceil}
\not \equiv  0 \pmod p,
\quad
\stirling {n}{k_n - 1}
%= \stirling {n}{\lfloor u_n^*(\theta)\rfloor}
\equiv  0 \pmod p
$$
because in the representation of the former Stirling number all products except one are divisible by $p$, whereas in the latter all products are divisible by $p$; see~\cite{erdoes_on_hammersley}.   If $n$ is large, $p$ is not among the prime factors of $Q_1$ and $Q_2$, hence~\eqref{eq:mode_unique_have_to_show} follows and the mode of $K_n(\theta)$ is uniquely defined.
Finally, the formula for $M_n$ follows from Theorem~2.13 of~\cite{Kabluchko+Marynych+Sulzbach:2016}.
\end{proof}

\begin{proof}[Proof of Proposition~\ref{prop:mode_stirling}]
It was shown by Hammersley~\cite{hammersley} that
\begin{equation}\label{eq:hammersley_mode}
u_n(1) = \left\lfloor  \log n + \gamma + \frac{\zeta(2)-\zeta(3)}{\log n + \gamma - \frac 32} + \frac{h(n)}{(\log n + \gamma - \frac 32)^2}\right\rfloor
\end{equation}
with some $-1.1 < h(n) < 1.44$. It is easy to check that
$$
\frac{\zeta(2)-\zeta(3)}{x} - \frac {1.1}{x^2} > -\frac 12 \text{ and }
 \frac{\zeta(2)-\zeta(3)}{x} + \frac {1.44}{x^2} <\frac 12
$$
for $x>2.5$. Hence, the proposition is true for $\log n + \gamma - \frac 32 > 2.5$, that is for $n\geq 31$. For $n=1,2,\ldots,30$ the statement is easy to verify using Mathematica~9.
\end{proof}

\begin{proof}[Proof of parts  (i) and (ii) of Theorem \ref{theo:stirling_mode_further}]
Part (i) follows essentially from Theorem~2.10 in~\cite{Kabluchko+Marynych+Sulzbach:2016} and its proof. Namely, it was shown in~\cite[Equation~(90)]{Kabluchko+Marynych+Sulzbach:2016}  that for $k= k(n)= u_n^*(\theta) + g \in \Z$ with $g=O(1)$ we have
$$
\sqrt{2\pi \theta \log n} \left(\P\{K_n(\theta) = k+1\} - \P\{K_n(\theta) = k\}\right)
=
-\frac {2g+1}{2\theta \log n} + o\left(\frac 1 {\log n} \right).
$$
The same relation, but with a better remainder term $O(\frac 1 {\log^2 n})$, follows from~\eqref{proof_iv1} which we shall prove below.
Taking $g=-\{u_n^*(\theta)\}$, so that $k= \lfloor u_n^*(\theta)\rfloor$ and $k+1=\lceil u_n^*(\theta)\rceil$, yields
\begin{multline*}
\P\{K_n(\theta) = \lceil u_n^*(\theta)\rceil\} - \P\{K_n(\theta) = \lfloor u_n^*(\theta)\rfloor\}
\\=
\frac 1 {\sqrt{2\pi \theta \log n}} \left(\frac {\{u_n^*(\theta)\}-\frac 12}{\theta \log n} + O\left(\frac 1 {\log^2 n} \right)\right).
\end{multline*}
It follows that there is a sufficiently large constant $C_0>0$ such that if $\{u_n^*(\theta)\} > \frac 12 + \frac {C_0}{\log n}$, then the right-hand side is positive and the mode equals $\lceil u_n^*(\theta)\rceil$.  Similarly, if $\{u_n^*(\theta)\} < \frac 12 - \frac {C_0}{\log n}$, then the right-hand side is negative and the mode equals $\lfloor u_n^*(\theta)\rfloor$.

The proof of part (ii) follows immediately from part (i) and the fact that, for every fixed $L>0$, we have  $\log (n+L)-\log n\to 0$ as $n\to\infty$. \end{proof}

\begin{proof}[Proof of part (iii) of Theorem \ref{theo:stirling_mode_further}]
In view of part (i) it suffices to show that
$$\limsup_{\eps \to 0} \limsup_{n \to \infty} \frac{\#\{1 \leq k  \leq n: \text{dist}(u_k^*(\theta), \Z + 1/2) < \eps \} }{n} = 0,$$
which, in turn, follows from the fact
\begin{equation}\label{proof_iii}
\limsup_{\eps \to 0} \limsup_{n \to \infty} \frac{\#\{1 \leq k  \leq n: \text{dist}(\log k, \alpha \Z + \beta) < \eps \}  }{n} = 0,
\end{equation}
for all $\alpha>0$ and $\beta \in \R$. Equation~\eqref{proof_iii} would be true if the sequence of fractional parts of $\alpha^{-1}\log k$, $k\in\N$, was uniformly distributed on $[0,1]$. However, the latter claim is unfortunately not true; see~\cite[Examples~2.4 and~2.5 on pp.~8--9]{kuipers_niederreiter_book}.   Let us prove \eqref{proof_iii}. We have, assuming that $\varepsilon<\alpha/2$,
\begin{align*}
\#\{1 \leq k  \leq n: \text{dist}(\log k,  & \alpha \Z + \beta) < \eps \}
=\sum_{k=1}^{n} \# \{ j \in \Z : \text{dist}(\log k, \alpha j + \beta) < \eps\}\\
& = \sum_{j\in\Z}\#\{1 \leq k  \leq n: \eee^{\alpha j+\beta-\eps} < k < \eee^{\alpha j+\beta+\eps} \} \\
& \leq \sum_{j\in\Z} \# \{k \in \N : \eee^{\alpha j+\beta-\eps}\vee 1 \leq k \leq \eee^{\alpha j+\beta+\eps}\wedge n \}.
\end{align*}
The summand on the right-hand side is the number of integers in the interval $[\eee^{\alpha j+\beta-\eps}\vee 1,\,\eee^{\alpha j+\beta+\eps}\wedge n]$ (which is empty if either $\eee^{\alpha j+\beta-\eps}>n$ or $\eee^{\alpha j+\beta+\eps}<1$) and hence is bounded from above by
$\left(\eee^{\alpha j+\beta+\eps}\wedge n - \eee^{\alpha j+\beta-\eps}\vee 1+1\right)_{+}$. Therefore,
$$
\#\{1 \leq k  \leq n: \text{dist}(\log k, \alpha \Z + \beta) < \eps \} \leq \sum_{j\in\Z}\left(\eee^{\alpha j+\beta+\eps}\wedge n - \eee^{\alpha j+\beta-\eps}\vee 1+1\right)_{+}.
$$
Further,
\begin{align*}
&\sum_{j\in\Z}\left(\eee^{\alpha j+\beta+\eps}\wedge n - \eee^{\alpha j+\beta-\eps}\vee 1+1\right)_{+}\\
&=\sum_{j\in\Z}\eee^{\alpha j+\beta+\eps}\ind_{\{\alpha j+\beta+\varepsilon<0\}}+\sum_{j\in\Z}\eee^{\alpha j+\beta+\eps}\ind_{\{\alpha j+\beta-\varepsilon<0,0\leq \alpha j+\beta+\varepsilon<\log n\}}\\
&+\sum_{j\in\Z}n\ind_{\{\alpha j+\beta-\varepsilon<0,\log n\leq \alpha j+\beta+\varepsilon\}}\\
&+\sum_{j\in\Z}\left(\eee^{\alpha j+\beta+\eps} - \eee^{\alpha j+\beta-\eps}+1\right)\ind_{\{\alpha j+\beta-\varepsilon\geq 0,\alpha j+\beta+\varepsilon<\log n\}}\\
&+\sum_{j\in\Z}\left(n - \eee^{\alpha j+\beta-\eps}+1\right)_{+}\ind_{\{\alpha j+\beta-\varepsilon\geq 0,\log n\leq \alpha j+\beta+\varepsilon\}}.
\end{align*}
Note that the first series converges, the second contains at most one summand since we assume $\varepsilon<\alpha/2$, and the third vanishes for $n$ large enough. It can be checked that
\begin{align*}
\sum_{j\in\Z}\left(\eee^{\alpha j+\beta+\eps} - \eee^{\alpha j+\beta-\eps}+1\right)\ind_{\{\alpha j+\beta-\varepsilon\geq 0,\alpha j+\beta+\varepsilon<\log n\}}\leq C(\alpha,\beta)(\eee^{\beta+\eps}-\eee^{\beta-\eps})n
\end{align*}
with $C(\alpha,\beta)$ being an absolute constant, and that for $n$ sufficiently large
\begin{align*}
\sum_{j\in\Z}\left(n - \eee^{\alpha j+\beta-\eps}+1\right)_{+}\ind_{\{\alpha j+\beta-\varepsilon\geq 0,\log n\leq \alpha j+\beta+\varepsilon\}}\leq n(1-\eee^{-2\eps})+1.
\end{align*}
Putting pieces together gives \eqref{proof_iii}.
\end{proof}

\begin{proof}[Proof of part (iv) of Theorem \ref{theo:stirling_mode_further}]
Recall the notation $w_n=\theta\log n$ and $x_n(k) = x_n(k,\theta)=(k-w_n)/\sqrt{w_n}$. Using Theorem~\ref{theo:stirling_edgeworth} with $r=4$, we obtain
\begin{multline*}
\sqrt{2\pi w_n}\, \P\{K_n(\theta)=k\}=\eee^{-\frac 12 {x_n^2(k)}}\\
\times\left(1+\frac{H_1(x_n(k))}{w_n^{1/2}}+\frac{H_2(x_n(k))}{w_n}+\frac{H_3(x_n(k))}{w_n^{3/2}}+\frac{H_4(x_n(k))}{w_n^2}+o\left(\frac{1}{\log^2 n}\right)\right),
\end{multline*}
{as $n \to\infty$ uniformly in $1 \leq k \leq n$.}
Now let $k=\theta\log n + a$, where $a=O(1)$ as $n\to\infty$,  so that $x_n(k)=a/w_n^{1/2}$. We have
\begin{align*}
H_1(x_n(k))&=A_{11}(\theta)\frac{a}{w_n^{1/2}}+A_{12}(\theta)\frac{a^3}{w_n^{3/2}},\\
H_2(x_n(k))&=A_{21}(\theta)+A_{22}(\theta)\frac{a^2}{w_n}+o\left(\frac{1}{w_n}\right),\\
H_3(x_n(k))&=A_{31}(\theta)\frac{a}{w_n^{1/2}}+o\left(\frac{1}{w_n^{1/2}}\right),\\
H_4(x_n(k))&=A_{41}(\theta)+o(1),
\end{align*}
where $A_{11}(\theta),\ldots,A_{41}(\theta)$ are some polynomials in $\widetilde {\chi_1}(0), \widetilde {\chi_2}(0),\widetilde {\chi_3}(0)$ and $\widetilde{\chi_4}(0)$; see Remark~\ref{rem:H_i}.  Plugging these expressions into the asymptotic expansion above and using the {expansion} $\eee^y=1+y+y^2/2+o(y^2)$, as $y\to 0$, yields
$$
\sqrt{2\pi w_n}\,\P\{K_n(\theta)=k\}=1-\left(\frac{a^2}{2}-A_{11}(\theta)a-A_{21}(\theta)\right)\frac{1}{w_n}+\frac{P_{\theta}(a)}{w_n^2}+o\left(\frac{1}{\log^2 n}\right),
$$
where
$$
P_{\theta}(a):=\frac{1}{8}a^4+\left(A_{12}(\theta)-\frac{1}{2}A_{11}(\theta)\right)a^3+\left(A_{22}(\theta)-\frac{1}{2}A_{21}(\theta)\right)a^2+A_{31}(\theta)a + A_{41}(\theta).
$$
Now let us write $k=\theta\log n +a^{\ast} + g$, where $a^{\ast}:=A_{11}(\theta)=-\frac{\theta\Gamma'(\theta)}{\Gamma(\theta)}-\frac 12$, yielding
\begin{multline}\label{proof_iv1}
\sqrt{2\pi w_n}\,\P\{K_n(\theta)=k\}
=
1-\left(\frac{g^2-(a^{\ast})^2}{2}-A_{21}(\theta)\right)\frac{1}{w_n}\\
+\frac{P_{\theta}(a^{\ast}+g)}{w_n^2}+o\left(\frac{1}{\log^2 n}\right).
\end{multline}
We are interested in $g$ being either $\lfloor u_n^{\ast}(\theta)\rfloor -u_n^{\ast}(\theta)=:g_n^{\prime}$ or $\lceil u_n^{\ast}(\theta)\rceil -u_n^{\ast}(\theta)=:g_n^{\prime\prime}$. {Let $M$ be the set of natural numbers $n$ with $\{u_n^{\ast}(\theta)\}<1/2<\{u_{n+1}^{\ast}(\theta)\}$.}
%Pick $n\in\N$ such that $\{u_n^{\ast}(\theta)\}<1/2<\{u_{n+1}^{\ast}(\theta)\}$ and
%Note that there are infinitely many such $n$'s
Note that $M$ has infinitely many elements
because $\log n\to \infty$ and $\log (n+1)-\log n\to 0$. {In the remainder of the proof, we always consider $n \in M$.} Since $u^{\ast}_{n+1}(\theta)-u^{\ast}_n(\theta)=O(n^{-1})$, we have
\begin{align*}
g_n^{\prime}=-1/2+O(n^{-1}),
\quad %\quad g_{n+1}^{\prime}=-1/2+O(n^{-1}),\\
g_n^{\prime\prime}=1/2+O(n^{-1}). %\quad g_{n+1}^{\prime\prime}=1/2+O(n^{-1}).
\end{align*}
Putting $k = \lfloor u_n^{\ast}(\theta)\rfloor$ into \eqref{proof_iv1} yields
\begin{align*}
&\sqrt{2\pi w_n}\,\P\{K_n(\theta)=\lfloor u_n^{\ast}(\theta)\rfloor\}\\
&=1-\left(\frac{(g_n^{\prime})^2-(a^{\ast})^2}{2}-A_{21}(\theta)\right)\frac{1}{w_n}+\frac{P_{\theta}(a^{\ast}+g_n^{\prime})}{w_n^2}+o\left(\frac{1}{\log^2 n}\right)\\
&=1-\left(\frac{1-4(a^{\ast})^2}{8}-A_{21}(\theta)\right)\frac{1}{w_n}+\frac{P_{\theta}(a^{\ast}-1/2)}{w_n^2}+o\left(\frac{1}{\log^2 n}\right).
\end{align*}
Analogously, putting $k = \lceil u_n^{\ast}(\theta)\rceil$ gives
\begin{align*}
&\sqrt{2\pi w_n}\,\P\{K_n(\theta)=\lceil u_n^{\ast}(\theta)\rceil\}\\
%&=1-\left(\frac{(g_n^{\prime\prime})^2-(a^{\ast})^2}{2}-A_{21}(\theta)\right)\frac{1}{w_n}+\frac{P_{\theta}(a^{\ast}+g_n^{\prime\prime})}{w_n^2}+o\left(\frac{1}{\log^2 n}\right)\\
&=1-\left(\frac{1-4(a^{\ast})^2}{8}-A_{21}(\theta)\right)\frac{1}{w_n}+\frac{P_{\theta}(a^{\ast}+1/2)}{w_n^2}+o\left(\frac{1}{\log^2 n}\right).
\end{align*}
For sufficiently large $n$ the mode $u_n(\theta)$ equals either $\lfloor u_n^{\ast}(\theta)\rfloor$ or $\lceil u_n^{\ast}(\theta)\rceil$  depending on the sign of
$$
s^{\ast}(\theta):=P_{\theta}(a^{\ast}+1/2)-P_{\theta}(a^{\ast}-1/2).
$$
%Further, putting $\lfloor u_{n+1}^{\ast}(\theta)\rfloor$ and $\lceil u_{n+1}^{\ast}(\theta)\rceil$ into \eqref{proof_iv1} and using
%$$
%1/w_n-1/w_{n+1}=o(n^{-1}),
%$$
%we obtain
%\begin{align*}
%&\sqrt{2\pi\theta\log (n+1)}\,\P\{K_{n+1}(\theta)=\lfloor u_{n+1}^{\ast}\rfloor\}\\
%&=1+\left(\frac{1-4(a^{\ast})^2}{8}-A_{21}(\theta)\right)\frac{1}{w_n}+\frac{P_{\theta}(a^{\ast}-1/2)}{w_n^2}+o\left(\frac{1}{\log^2 n}\right),
%\end{align*}
%and
%\begin{align*}
%&\sqrt{2\pi\theta\log (n+1)}\,\P\{K_{n+1}(\theta)=\lceil u_{n+1}^{\ast}\rceil\}\\
%&=1+\left(\frac{1-4(a^{\ast})^2}{8}-A_{21}(\theta)\right)\frac{1}{w_n}+\frac{P_{\theta}(a^{\ast}+1/2)}{w_n^2}+o\left(\frac{1}{\log^2 n}\right).
%\end{align*}
%Again, the choice between $\lfloor u_{n+1}^{\ast}(\theta)\rfloor$ and $\lceil u_{n+1}^{\ast}(\theta)\rceil$ depends on the sign of $s^{\ast}(\theta)$.
%Combining pieces together we see that if
In the following we shall show that $s^{\ast}(\theta)>0$, hence $u_n(\theta)=\lceil u_n^{\ast}(\theta)\rceil$, while $\nint (u_{n}(\theta)) = \lfloor u_n^{\ast}(\theta)\rfloor$, so that $u_{n}(\theta)\neq \nint(u_n^{\ast}(\theta))$.
%On the other hand, if $s^{\ast}(\theta)<0$ then $u_{n+1}(\theta)=\lfloor u_{n+1}^{\ast}(\theta)\rfloor\neq \nint(u_{n+1}^{\ast}(\theta))$. It remains to show that $s^{\ast}(\theta)\neq 0$.
With the aid of Mathematica~9, see~\cite{mathematica}, recalling the polygamma function $\psi^{(m)} (\theta) = (\log \Gamma(\theta))^{(m+1)}$, it can be checked that
$$
s^{\ast}(\theta)=\frac{\theta^2}{2}\left(2 \psi^{(1)}(\theta) +\theta\psi^{(2)}(\theta)	\right).
$$
Using a well-known formula for the polygamma function (see 6.4.10 in \cite{Abramowitz+Stegun:1964})
$$
\psi^{(m)} (\theta) = (\log \Gamma(\theta))^{(m+1)}=(-1)^{m+1}m!\sum_{k=0}^{\infty}\frac{1}{(\theta+k)^{m+1}},\quad -\theta\notin \N_0,\quad m \geq 1,
$$
we finally obtain
$$
s^{\ast}(\theta)=\theta^2\sum_{k=1}^{\infty}\frac{k}{(\theta+k)^3},\quad \theta>0,
$$
yielding positivity of $s^{\ast}(\theta)$ for all $\theta>0$. The proof of part (iv), as well as of the whole theorem, is complete.
\end{proof}
\begin{remark}
For $\theta=1$ we have $s^*(1) = \zeta(2) - \zeta(3)$, a term appearing in Hammersley's formula~\eqref{eq:hammersley_mode_1}. In fact, in the special case $\theta=1$ part (iv) could be deduced directly from~\eqref{eq:hammersley_mode_1}.
\end{remark}

%SUPPLEMENT: see after the end of the file

\subsection*{Acknowledgement} The work of Alexander Marynych was supported by a Humboldt Research Fellowship of the Alexander von Humboldt Foundation. The work of Henning Sulzbach was supported by a Feodor Lynen Research Fellowship of the Alexander von Humboldt Foundation.

\bibliographystyle{plainnat}
\bibliography{stirling_numbers}

\end{document}